\theoremstyle{definition}
\newtheorem{definition}{Definition}[section]
\newtheorem{notation}[definition]{Notation}
\newtheorem{example}[definition]{Example}
\theoremstyle{plain}
\newtheorem{theorem}[definition]{Theorem}
\newtheorem{lemma}[definition]{Lemma}
\newtheorem{proposition}[definition]{Proposition}
\newtheorem{corollary}[definition]{Corollary}
\newtheorem{remark}[definition]{Remark}
\newcommand{\beq}{\begin{equation}}
\newcommand{\eeq}{\end{equation}}
\newcommand{\bdfn}{\begin{definition}}
\newcommand{\edfn}{\end{definition}}
\newcommand{\bthm}{\begin{theorem}}
\newcommand{\ethm}{\end{theorem}}
\newcommand{\bprop}{\begin{proposition}}
\newcommand{\eprop}{\end{proposition}}
\newcommand{\bcor}{\begin{corollary}}
\newcommand{\ecor}{\end{corollary}}
\newcommand{\blem}{\begin{lemma}}
\newcommand{\elem}{\end{lemma}}
\newcommand{\bex}{\begin{example}}
\newcommand{\eex}{\end{example}}
\newcommand{\bxc}{\begin{exercise}}
\newcommand{\exc}{\end{exercise}}
\newcommand{\bntn}{\begin{notation}}
\newcommand{\entn}{\end{notation}}
\newcommand{\be}{\begin{enumerate}}
\newcommand{\ee}{\end{enumerate}}
\newcommand{\bce}{\begin{center}}
\newcommand{\ece}{\end{center}}
\newcommand{\bi}{\begin{itemize}}
\newcommand{\ei}{\end{itemize}}
\newcommand{\bt}{\begin{tabular}}
\newcommand{\et}{\end{tabular}}
\newcommand{\ra}{\rightarrow}
\newcommand{\Ra}{\Rightarrow}
\newcommand{\se}{\subseteq}
\newcommand{\ol}{\overline}
\newcommand{\nin}{\in\!\!\!\!\!/}
\newcommand{\si}{\wedge}
\newcommand{\sau}{\vee}
\newcommand{\ba}{\begin{array}} 
\newcommand{\ea}{\end{array}}
\def\N{{\mathbb N}}
\newcommand {\bua} {\begin{eqnarray*}}
\newcommand {\eua} {\end {eqnarray*}}
\newcommand{\ds}{\displaystyle}
\begin{document}

\date{}

\title{Maximal residuated lattices with lifting Boolean center}
\author{ George Georgescu${}^1$, Lauren\c{t}iu Leu\c stean$^{2,3}$  and Claudia Mure\c{s}an${}^1$\\[0.2cm]
\footnotesize ${}^1$ 
Faculty of Mathematics and Computer Science, University of Bucharest, \\
\footnotesize Academiei 14, RO 010014, Bucharest, Romania\\
\footnotesize E-mails: ggeorgescu@rdslink.com, c.muresan@yahoo.com \\[0.1cm]
\footnotesize ${}^2$ Department of Mathematics, Technische Universit\" at Darmstadt,\\
\footnotesize Schlossgartenstrasse 7, 64289 Darmstadt, Germany\\[0.1cm]
\footnotesize${}^3$ Institute of Mathematics "Simion Stoilow'' of the Romanian Academy, \\
\footnotesize Calea Grivi\c tei 21, P.O. Box 1-462, Bucharest, Romania\\[0.1cm]
\footnotesize E-mail: leustean@mathematik.tu-darmstadt.de
}
\maketitle

\begin{abstract}
\noindent In this paper we define, inspired by ring theory, the class of maximal resi-duated lattices with lifting Boolean center and prove a structure theorem for them: any maximal residuated lattice with lifting Boolean center is isomorphic to a finite direct product of local residuated lattices.

\vspace*{10pt} 
\noindent {\bf MSC:} 06F35, 03G10.\\[0.1cm]
\noindent {\bf Keywords:} maximal residuated lattices, lifting Boolean center, semilocal residuated lattices, dense elements.
\end{abstract}

\section{Introduction}

\noindent A\,  {\em commutative integral residuated bounded lattice}\, is an algebraic  structure $(A,\vee ,\wedge ,\odot ,\rightarrow ,0,1)$ such that $(A,\vee ,\wedge ,0,1)$ is a bounded lattice, $(A,\odot ,1)$ is a commutative monoid and, for all $a,b,c\in A$, 
$$a\leq b\rightarrow c \text{ if and only if } a\odot b\leq c.$$

Commutative integral residuated bounded lattices have been studied extensively and include important classes of algebras such as BL-algebras, introduced by ${\rm H\acute{a}jek}$ as the algebraic counterpart of his Basic Logic \cite{haj},  and MV-algebras, the algebraic setting for \L ukasiewicz propositional logic (we refer to the monograph \cite{cdom} for a detailed treatment of MV-algebras). Since in this paper we work only with commutative integral residuated bounded lattices, we shall call them simply {\em residuated lattices}. In order to simplify the notation, a residuated lattice $(A,\vee ,\wedge ,\odot ,\rightarrow ,0,1)$ will be referred by its support set $A$. The {\em Boolean center of $A$}, denoted $B(A)$, is the set of all complemented elements of the  bounded lattice $(A,\vee ,\wedge ,0,1)$.

The main purpose of this paper is to define the class of {\em maximal residuated lattices with lifting Boolean center } and to prove a structure theorem for them. 

The inspiration for defining this class of residuated lattices comes from ring theory. Maximal rings are an important class of commutative rings with unit; we refer to \cite{brandal} for a book treatment. The idea of {\em lifting idempotents}, due to Nicholson \cite{nic}, turns out to be very useful in studying different classes of rings. 

If $A$ is a residuated lattice, $\{a_i\}_{i\in I}\se A$ and $\{F_i\}_{i\in I}$ is a family of filters of $A$, then $A$ is maximal iff, given a family of congruences $\{x\equiv a_i(mod\, F_i)\}_{i\in I}$ of $A$, being able to find a solution for any finite subset of these congruences implies one can find a solution for all the congruences. We refer to Section \ref{maximal} for the formal definition.

Similar notions were developed for distributive lattices \cite{ge-1991}, MV-algebras \cite{figele} and BL-algebras \cite{leo}.  
Obviously, residuated lattices with a finite number of filters are maximal; hence, finite and simple residuated lattices are maximal. The converse is not true. An example of a maximal MV-algebra with an infinite number of ideals is given in \cite[Proposition 9]{figele}.

A residuated lattice $A$ is said to have {\em lifting Boolean center} iff for every $e\in B(A/{\rm Rad}(A))$ there exists a  $f\in B(A)$ such that $e=f/{\rm Rad}(A)$. Here ${\rm Rad}(A)$ is the intersection of all maximal filters of $A$.

The main result of the paper is the following (see Theorem \ref{main-theorem}).\\[0.2cm]
{\bf Theorem.} Any maximal residuated lattice with lifting Boolean center is isomorphic to a finite direct product of local residuated lattices.\\[0.2cm]

This structure theorem corresponds in the setting of residuated lattices to Zelinsky's theorem for maximal rings \cite{Zel53}, \cite[Theorem 2.6]{brandal}. In fact, we prove even a stronger result, namely that a residuated lattice $A$ with lifting Boolean center is maximal if and only if it is isomorphic to a finite direct product of some special residuated lattices, determined by elements in $B(A)$ (see Theorem \ref{char-maximal}).

\section{Definitions and basic properties}\label{prelim1}

\noindent We refer the reader to \cite{GalKowJipOno} for basic results in the theory of residuated lattices. In the following, we only present the material needed in the remainder of the paper. 

We shall denote with ${\cal RL}$ the variety of residuated lattices and ${\bf RL}$ the ca\-te\-go\-ry having as objects nontrivial residuated lattices and morphisms of re\-si\-du\-a\-ted lattices as morphisms. We recall that by a {\em residuated lattice} we mean in fact a {\em commutative integral residuated bounded lattice}.

Let $A$ be a residuated lattice.  We use the notation $L(A)$ for the bounded lattice $(A,\vee ,\wedge ,0,1)$. For all $a,b\in A$, let us define
$$\neg \,a:=a\rightarrow 0, \quad a\leftrightarrow b:=(a\rightarrow b)\wedge (b\rightarrow a).$$

The following lemma collects some useful properties (see for example \cite{GalKowJipOno}).

\blem\label{useful-identities}
For all $a,b,c,d\in A$,
\be
\item $\,\,$ $\neg \, 0=1$, $\neg \, 1=0$;
\item\label{(1)} $\,\,$ $a=1\rightarrow a$ and $1=a\rightarrow 1$;
\item\label{(3)} $\,\,$ $a\leq b$ iff $a\rightarrow b=1$ and $\,\,$ $a\leq \neg \, b$ iff $a\odot b=0$;
\item\label{(-1)} $\,\,$ if $a\leq b$ and $c\leq d$ then $a\odot c\leq b\odot d$;
\item\label{(4)} $\,\,$ if $a\leq b$ then $c\rightarrow a\leq c\rightarrow b$ and $b\rightarrow c\leq a\rightarrow c$;
\item $\,\,$ $a\odot b\leq a\wedge b$;
\item $\,\,$ $a\le b\ra a$;
\item $\,\,$ $a\odot 0=0$, $\,\,0\rightarrow a=1$ and $\,\,a\leftrightarrow 0=\neg \, a$;
\item $\,\,$ $a\leq \neg \, \neg \, a$ and $\,\,\neg \, \neg \, \neg \, a=\neg \, a$;
\item\label{comutodotvee} $\,\,$ $a\odot (b\vee c)=(a\odot b)\vee (a\odot c)$;
\item\label{(2)} $\,\,$ $(a\vee b)\rightarrow c=(a\rightarrow c)\wedge (b\rightarrow c)$;
\item\label{(9)} $\,\,$ $a\rightarrow b\leq (c\rightarrow a)\rightarrow (c\rightarrow b)$;
\item\label{(8)} $\,\,$ $a\rightarrow (b\rightarrow c)=b\rightarrow (a\rightarrow c)$.
\ee
\elem

For all $a\in A$, we define $a^{0}=1$ and $a^n=a^{n-1}\odot a$ for all $n\in \N^{*}$.  The {\em order} of $a\in A$, in symbols $ord(a)$,  is the smallest $n\in\N$ such that  $a^n=0$. If no such $n$ exists, then $ord(a)=\infty$. An element $a\in A$ is called: {\em nilpotent} iff $ord(a)$ is finite; a {\em unity} iff $\neg(a^n)$ is nilpotent for all $n\in\N$; {\em finite} iff both $a$ and $\neg a$ are nilpotent. 

A {\em filter of $A$} is a nonempty set $F\subseteq A$ such that, for all $a,b\in A$,\\(i) $a,b\in F$ implies $a\odot b\in F$;\\(ii) $a\in F$ and $a\leq b$ imply $b\in F$.\\[0.1cm] A filter $F$ of $A$ is {\em proper} iff $F\neq A$. We shall denote by ${\cal F}(A)$ the set of filters of $A$.

A proper filter $P$ of $A$ is called {\em prime} iff $a\vee b\in P  \text{ implies }  a\in P \text{ or } b\in P$ for all $a,b\in A$.
The set of prime filters of $A$ is denoted by ${\rm Spec}(A)$.

A proper filter $M$ of $A$ is called {\em maximal} iff it is not
contained in any other proper filter. We denote by ${\rm Max}(A)$ \index{$Max(A)$} the set of maximal filters of $A$. An immediate application of Zorn's lemma is the fact that any proper filter of $A$ can be extended to a maximal filter. As a consequence, ${\rm Max}(A)\ne\emptyset$ for any nontrivial residuated lattice $A$.

Let  $X\subseteq A$. The filter of $A$ generated by $X$ will be denoted by $<X>$. We have that $<\emptyset>=\{1\}$ and for $X\neq \emptyset$,
\begin{center}$<X>=\{a\in A\mid x_1\odot\cdots\odot x_n\leq a$ for some $n\in\N^{*}$ and  $x_1,\cdots , x_n\in X\}$. 
\end{center}
For any $a\in A$, $<a>$ denotes the principal filter of $A$ generated
by $\{a\}$. Then $<a>=\{b\in A\mid a^n\leq b$ for some
$n\in\N^{*}\}$.

The following results are standard and they are relatively easy to prove; for example, by following the proofs of the corresponding results for BL-algebras from \cite[Chapter 1]{leo}. 

\bprop\label{filtre-latice-completa} 
$({\cal F}(A), \subseteq)$ is a complete lattice. For every family 
$\{F_i\}_{i\in I}$ of filters of $A$, we have that
\begin{eqnarray*}
\bigwedge_{i\in I}F_i  =  \bigcap_{i\in I} F_i, \quad\bigvee_{i\in I}F_i =  <\bigcup_{i\in I} F_i>. 
\end{eqnarray*}
\eprop

The {\em radical } of $A$, denoted by ${\rm Rad}(A)$, is the intersection of all maximal filters of $A$, when $A$ is a nontrivial residuated lattice $A$. If $A=\{0\}$ is trivial, then $Rad(A)=\{0\}$ by definition. 

\bprop\label{Rad-prop}

Let $A$ be a residuated lattice. Then
\be
\item\label{Rad-unity} ${\rm Rad}(A)=\{a\in A\mid a \text{~is a unity}\}$;
\item\label{Rad-subalg} ${\rm Rad}(B)=B\cap {\rm Rad}(A)$ for any  subalgebra $B$ of $A$;
\item\label{Rad-product} ${\rm Rad}\left(\prod _{i\in I}A_{i}\right)=\prod _{i\in I}{\rm Rad}(A_{i})$ for any family $\{A_{i}|i\in I\}$ of residuated lattices. 
\ee
\eprop
\begin{proof}
(i) See \cite[Lemma 4.1]{hoh}. \,\, (ii) and (iii) are easy consequences of (i).\end{proof}

A residuated lattice $A$ is said to be {\em local} iff $A$ has exactly one maximal filter. A local residuated lattice $A$ is called {\em perfect} iff for all $a\in A$, $ord(a)<\infty $ if and only if $ord(\neg a)<\infty $.

\begin{proposition}{\rm \cite{ciu}}
$A$ is local if and only if  $D(A):=\{a\in A \mid ord(a)=\infty \}$ is the unique maximal filter of $A$.
\end{proposition}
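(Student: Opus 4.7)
The plan is to prove the two directions separately, with the backward direction being immediate from the definition and the forward direction requiring us to identify the unique maximal filter $M$ with $D(A)$.

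For the trivial direction ($\Leftarrow$), if $D(A)$ is the unique maximal filter of $A$, then in particular $A$ has exactly one maximal filter, hence $A$ is local by definition. So the real content lies in proving ($\Rightarrow$). Assume $A$ is local, and let $M$ denote its unique maximal filter. The strategy is to prove $M = D(A)$ by two inclusions, using only the basic filter axioms and the fact that any proper filter extends to a maximal one (so, by locality, to $M$).

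For $M \subseteq D(A)$: take $a \in M$ and suppose, for a contradiction, that $\operatorname{ord}(a) = n < \infty$, i.e.\ $a^n = 0$. Iterating the closure property $x, y \in M \Rightarrow x \odot y \in M$ gives $a^n \in M$, i.e.\ $0 \in M$, which forces $M = A$ and contradicts properness. Therefore every element of $M$ has infinite order, so $M \subseteq D(A)$.

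For $D(A) \subseteq M$: let $a \in D(A)$, so $a^n \neq 0$ for all $n \in \mathbb{N}^{*}$. Consider the principal filter $\langle a \rangle = \{b \in A \mid a^n \leq b \text{ for some } n\}$. It is proper: otherwise $0 \in \langle a \rangle$ would give $a^n \leq 0$, hence $a^n = 0$ for some $n$, contradicting $a \in D(A)$. By the Zorn's lemma argument noted in the text, $\langle a \rangle$ is contained in some maximal filter of $A$, which by locality must be $M$, so $a \in M$. The main (and only) conceptual point to watch is that closure of filters under $\odot$ together with properness immediately rules out nilpotent elements, while the non-nilpotency of elements of $D(A)$ is precisely what is needed to keep $\langle a \rangle$ proper; no extra structural fact about $A$ (beyond the characterization of $\operatorname{Rad}(A)$ from Proposition~\ref{Rad-prop} being invoked implicitly via the definition of locality) is required.
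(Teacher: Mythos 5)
Your proof is correct. The paper itself gives no argument for this proposition --- it is quoted from the reference \cite{ciu} without proof --- so there is nothing to compare against, but your two inclusions are exactly the standard argument: nilpotents cannot lie in a proper filter because filters are closed under $\odot$ and upward closed, and any non-nilpotent element generates a proper principal filter which, by locality, must sit inside the unique maximal filter. The closing parenthetical about Proposition~\ref{Rad-prop} is unnecessary (locality is defined directly by counting maximal filters, with no appeal to the radical), but it does no harm.
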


The following lemma will be useful in Section \ref{maximal}.

\blem\label{filtre-max-1}
Define $\ds F_M:=\{a\in A\mid \mbox{\rm the set } \{M\in {\rm Max}(A)\mid a\nin\, M\}\, \mbox {\rm is finite} \}$.
Then $F_M$ is a filter of $A$ and for any finite subset $\{M_1,\ldots, M_n\}$ of ${\rm Max}(A)$,
\[\displaystyle\bigcap \{M\mid M\in {\rm Max}(A)-\{M_1,\ldots, M_n\} \}\se F_M. \]
\elem
\begin{proof}
We have that $1\in F_M$, since $\{M\in {\rm Max}(A)\mid 1\nin\, M\}=\emptyset$. If $a,b\in A$, then 
\[\{M\!\in\! {\rm Max(A)}\!\mid\! a\odot b\nin M\}\!=\!\{M\!\in\! {\rm  Max(A)}\!\mid\!a\nin M\}\cup\{M\!\in\!{\rm Max}(A)\!\mid\!b\nin M\},\]
hence $a,b\in F_M$ implies $a\odot b\in F_M$. If $a\le b$, then 
\[\{M\in {\rm Max}(A)\mid b\nin\,M\}\se \{M\in {\rm Max}(A)\mid a\nin\,M\},\]
hence $a\in F_M$ implies $b\in F_M$.\\
If $a\nin\,F_M$, then the set $\{M\in Max(A)\mid a\nin\,M\}$ is infinite, hence there is $M\in {\rm Max}(A)-\{M_1,\ldots, M_n\}$ such that $a\nin M$. Thus, $a\nin\, \bigcap\{M\mid M\in Max(A)-\{M_1,\ldots, M_n\}\}$.\end{proof}

\bprop\label{subalgebra-cardinal-filters}
Let $B$ be a  residuated lattice and $A$ be a subalgebra of $B$. Then 
$$|{\cal F(A)}|\le |{\cal F(B)}|, \,\, |Spec(A)|\le|Spec(B)| \text{ and } |Max(A)|\le |Max(B)|.$$
\eprop
\begin{proof}
Just follow the proof of the corresponding result for BL-algebras \cite[Proposition 1.2.25]{leo}.\end{proof}

If $h:A\rightarrow B$ is a morphism of residuated lattices, then the {\em kernel of $h$} is the set ${\rm Ker}(h):=\{a\in A \mid h(a)=1\}$. Obviously, $h$ is injective iff ${\rm Ker}(h)=\{1\}$. 

\bprop \label{hom}
Let $h:A\ra B$ be a morphism of residuated lattices. Then the following properties hold:
\be
\item\label{inverse-image-h-F}  for any (proper, prime, maximal) filter $F$ of $B$, the set $h^{-1}(F)=\{a\in A\mid h(a)\in F\}$ is a (proper, prime, maximal) filter of $A$; thus, in particular, $Ker(h)$ is a proper filter of $A$;
\item\label{surjective-hF} if $h$ is surjective and $F$ is a filter of $A$, then $h(F)$ is a filter of B;
\item\label{surjective-hF-maximal} if $h$ is surjective and $M$ is a maximal filter of $A$ such that $h(M)$ is proper, then $h(M)$ is a maximal filter of B;
\item\label{hom-cardinal-image} if $h$ is surjective, then  \[|{\cal F}(B)|\le |{\cal F}(A)|,\,\, |{\rm Spec}(B)|\le |{\rm Spec}(A)| \text{ and } |{\rm Max}(B)|\le |{\rm Max}(A)|.\]
\ee
\eprop

With any filter $F$ of $A$ we can associate a congruence relation $\equiv(mod\,F)$ on $A$
by defining
\[a\equiv b({\rm mod}\ F)\,\,\mbox{\rm  if and only if  } \,\,a\leftrightarrow b\in F.\]
For any $a\in A$, let $a/F$  be the equivalence class $a/_{\equiv({\rm mod}\ F)}$. If we denote
by $A/F$ the quotient set
$A/_{\equiv( mod\,F)}$, then $A/F$ becomes a residuated lattice with the operations induced from those of $A$.

\blem \label{prop-cong}
Let $F$ be a filter of $A$ and $a,b\in A$. Then
\be
\item\label{aF=0Fsau1F} $a/F=1/F$ iff $a\in F$ and $a/F=0/F$ iff $\neg a\in F$;
\item\label{aF-leq-bF} $a/F\le b/F$ iff $a\ra b\in F$;
\item\label{prop-cong-Fproper} if $F$ is proper and $a/F=0/F$, then $a\nin F$.
\ee
\elem

The following proposition follows from a general result in universal algebra \cite{bur}. A proof for this particular case is similar to the proof of \cite[Proposition 1]{figele}.

\bprop{\bf (Chinese Remainder Theorem)}\label{chinese}\\
Let $n\in{\rm I\! N}^{*}$ and $F_1,\ldots, F_n$ be filters of the residuated lattice $A$ such that $F_i\sau F_j=A$ for every $i\ne j,\,\, i,j\in \ol{1,n}$. Then, for every $a_1,\ldots a_n\in A$, there exists an $a\in A$ such that $a\equiv a_i(mod\,F_i)$ for all $i\in \ol{1,n}$. 
\eprop

For any filter $F$ of $A$, let us denote by $p_F$ the  quotient map from $A$ onto $A/F$, defined by $p_F(a)=a/F$ for any $a\in A$. Then $F=Ker(p_F)$. For simplicity, we shall use the notation $G/F$ for $p_F(G)$. 

\blem\label{lemma-p_F}
Let $F,G$ be filters of $A$ such that $F\se G$. Then
\be
\item\label{p_F-aF-in-GF} for all $a\in A$, $a/F\in G/F$ iff $a\in G$; 
\item $G$ is proper iff $G/F$ is a proper filter of $A/F$;
\ee
\elem

\bprop\label{prop-p_F}
Let $F,G$ be filters of $A$ such that  $F\se G$. Then
\be
\item the map $p_F$ is an inclusion-preserving bijective correspondence between the filters of $A$ containing $F$ and the filters of $A/F$; the inverse map is also inclusion-preserving; 
\item\label{maximal-proper-A-AF} $p_F$ maps  the set of proper (maximal) filters of $A$ containing $F$ onto the set of proper (maximal) filters of $A/F$;
\item\label{phi-F-G} the map $\phi:A/F\to A/G, \,\,\phi(a/F)=a/G$ is a well-defined surjective morphism of residuated lattices; $\phi$ is an isomorphism if and only if $F=G$;
\item\label{second-iso} the map $(A/F)/(G/F)\to A/G, \,\,\,(a/F)/(G/F)\mapsto a/G$ is an isomorphism of residuated lattices.
\ee
\eprop

As an immediate application of Proposition \ref{prop-p_F}.(\ref{maximal-proper-A-AF}) and Lemma \ref{lemma-p_F}, we get the following.

\bprop\label{Max-A-AF-RadA}
Let $A$ be a nontrivial residuated lattice and $F$ be a proper filter of $A$. 
\be
\item Then $|{\rm Max}(A/F)|\le |{\rm Max}(A)|$. 
\item Assume that $F\se {\rm Rad}(A)$. Then 
\bce $|{\rm Max}(A/F)|=|{\rm Max}(A)|$ and ${\rm Rad}(A/F)={\rm Rad}(A)/F$.
\ece In particular, $A$ is local if and only if $A/F$ is local. 
\ee
\eprop

\bprop\label{product-AF}
Let $\{A_{i}\mid i\in I\}$ be a family of residuated lattices and $F_{i}$ be a filter of $A_i$ for every $i\in I$. Then $F:=\prod _{i\in I}F_{i}$  is a filter of $A:=\prod _{i\in I}A_{i}$ and $A/F=\prod _{i\in I}A_i/F_i$.  Moreover, if $p_F:A\to A/F,\, p_i:A_i\to A_i/F_i\, (i\in I)$ are the quotient maps, then $p_F=\prod _{i\in I}p_i$.
\eprop

Let $B(A)$ be the {\em Boolean center of $A$}, that is the set of all complemented e\-le\-ments of the lattice $L(A)$. The following lemma collects some useful properties of $B(A)$.

\bprop\cite{BusPic06,GalKowJipOno}\label{BA-prop}
\be
\item\label{BA-neg-e} $B(A)$ is a Boolean subalgebra of $L(A)$, $\neg\,e$ is the unique complement of $e\in B(A)$ and $\neg\neg\, e=e$.
\item\label{e-idempotent} For any $e\in B(A)$, $e\odot e=e$ and $<e>=\{a\in A\mid e\leq a\}$.
\item\label{BA-e-f} For all $e,f\in B(A)$, $e\odot f=e\si f\in B(A)$, $e\ra f=\neg\, e\sau f\in B(A)$ and $e\leftrightarrow f=(e\ra f)\si (f\ra e)\in B(A)$.
\item\label{BA-RadA=1} $B(A)\cap {\rm Rad}(A)=\{1\}$.
\item\label{B-product} $ B\left(\prod _{i\in I}A_{i}\right)=\prod _{i\in I}B(A_{i})$ for any family $\{A_{i}\mid i\in I\}$ of residuated lattices.
\ee
\eprop

\begin{lemma}
For every $e,f\in B(A)$ and $a,b\in A$, we have:
\be
\item\label{(0)} if $e\leq a$ then $\neg\, e\rightarrow a=a$;
\item\label{(10)} $e\rightarrow a=e\rightarrow (e\rightarrow a)$;
\item\label{(7)} $e\rightarrow (a\rightarrow b)=(e\rightarrow a)\rightarrow (e\rightarrow b)$;
\item\label{evx} $\neg \, e\rightarrow a=e\vee a$;
\item\label{comutarea} $a\vee (e\wedge f)=(a\vee e)\wedge (a\vee f)$.
\ee
\label{bcomut}
\end{lemma}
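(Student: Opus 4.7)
The strategy is to prove the five items essentially in order, using the basic properties listed in Lemma \ref{useful-identities} and Proposition \ref{BA-prop} together with the adjunction $x \le y \ra z \Leftrightarrow x\odot y \le z$ (equivalently, $(x\odot y)\ra z = x\ra(y\ra z)$), which is built into the definition of a residuated lattice.

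For (\ref{(0)}), I would start from $\neg e \sau e = 1$ (Proposition \ref{BA-prop}(\ref{BA-neg-e})) and apply Lemma \ref{useful-identities}(\ref{(2)}):
$$a = 1\ra a = (e\sau \neg e)\ra a = (e\ra a)\si(\neg e\ra a).$$
Under the hypothesis $e\le a$ one has $e\ra a = 1$, whence the right-hand side collapses to $\neg e\ra a$.

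For (\ref{(10)}), I would use idempotency $e\odot e = e$ (Proposition \ref{BA-prop}(\ref{e-idempotent})) together with the adjunction: $e\ra(e\ra a) = (e\odot e)\ra a = e\ra a$.

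Item (\ref{(7)}) is the main technical point. The key auxiliary fact I need is that $e\odot x = e\si x$ for every $e\in B(A)$ and every $x\in A$; the inequality $\le$ comes from Lemma \ref{useful-identities}, and for $\ge$ I would argue
$$x = x\odot(e\sau\neg e) = (x\odot e)\sau(x\odot\neg e)$$
by Lemma \ref{useful-identities}(\ref{comutodotvee}), then intersect with $e$ and use lattice distributivity of $\si$ over $\sau$ (available in any bounded lattice of this type because $L(A)$ is distributive for residuated lattices) together with $\neg e\si e = 0$. Applied to $x=e\ra a$ this yields $e\odot(e\ra a) = e\si a = e\odot a$. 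Then two applications of the adjunction give
$$(e\ra a)\ra(e\ra b) = \bigl((e\ra a)\odot e\bigr)\ra b = (e\odot a)\ra b = e\ra(a\ra b).$$

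For (\ref{evx}), I would prove two inequalities. The inequality $e\sau a\le \neg e\ra a$ follows because $a\le\neg e\ra a$ (Lemma \ref{useful-identities}(\ref{(1)})/$a\le b\ra a$) and $e = \neg\neg e = \neg e\ra 0\le \neg e\ra a$ (using $0\le a$ and Lemma \ref{useful-identities}(\ref{(4)})). The inequality $\neg e\ra a \le e\sau a$ follows by applying item (\ref{(0)}) to $e\le e\sau a$: $\neg e\ra(e\sau a) = e\sau a$, combined with monotonicity of $\neg e\ra(-)$ applied to $a\le e\sau a$.

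For (\ref{comutarea}), I would rewrite everything via (\ref{evx}): $a\sau e = \neg e\ra a$ and $a\sau f = \neg f\ra a$, hence
$$(a\sau e)\si(a\sau f) = (\neg e\ra a)\si(\neg f\ra a) = (\neg e\sau\neg f)\ra a$$
by Lemma \ref{useful-identities}(\ref{(2)}). Using De Morgan in the Boolean algebra $B(A)$, $\neg e\sau\neg f = \neg(e\si f)$, and $e\si f\in B(A)$ by Proposition \ref{BA-prop}(\ref{BA-e-f}). A final application of (\ref{evx}) to $e\si f$ gives $\neg(e\si f)\ra a = (e\si f)\sau a$, finishing the proof. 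The main obstacle is the calculation $e\odot x = e\si x$ needed in (\ref{(7)}); once that is in hand, the other items are short manipulations using the Boolean identities and the adjunction.
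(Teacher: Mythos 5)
Your items (i), (ii), (iv) and (v) are all correct: (i) is identical to the paper's argument; (ii) replaces the paper's computation with $(e\vee\neg e)\ra(e\ra a)$ by the currying law $(e\odot e)\ra a=e\ra(e\ra a)$, which is a legitimate and shorter route; your second inequality in (iv) (apply (i) to $e\le e\vee a$ and use monotonicity) is a nice simplification of the paper's "test against an arbitrary upper bound $u$" argument; and your (v), which rewrites both sides via (iv) and Lemma \ref{useful-identities}.(\ref{(2)}) plus De~Morgan in $B(A)$, is cleaner than the paper's proof (which establishes the nontrivial inequality by taking $x\le(a\vee e)\wedge(a\vee f)$ and computing $x\odot\neg(e\wedge f)$).

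The genuine gap is in your justification of the auxiliary identity $e\odot x=e\wedge x$ needed for (iii): you invoke "lattice distributivity of $\wedge$ over $\vee$, available because $L(A)$ is distributive for residuated lattices." That claim is false. Only $\odot$ distributes over $\vee$ (Lemma \ref{useful-identities}.(\ref{comutodotvee})); the lattice reduct of a general commutative integral bounded residuated lattice need not be distributive. The paper's own six-element example in Section \ref{dense-lifting} is a counterexample: its reduct contains the pentagon $\{0,b,d,c,a\}$, and indeed $c\wedge(b\vee d)=c\wedge a=c$ while $(c\wedge b)\vee(c\wedge d)=0\vee d=d$. (Note also that if $L(A)$ were distributive, item (v) of the very lemma you are proving would be vacuous.) The identity $e\odot x=e\wedge x$ for $e\in B(A)$ is nevertheless true, and your argument is repaired by starting from $e\wedge x$ rather than from $x$: $e\wedge x=(e\wedge x)\odot(e\vee\neg e)=((e\wedge x)\odot e)\vee((e\wedge x)\odot\neg e)\le(x\odot e)\vee(e\odot\neg e)=x\odot e$, since $e\odot\neg e\le e\wedge\neg e=0$; this uses only $\odot$-distributivity over $\vee$. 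You should also make explicit the small step $e\wedge(e\ra a)=e\wedge a$ (from $e\odot(e\ra a)\le a$ and $a\le e\ra a$) before concluding $e\odot(e\ra a)=e\odot a$. With that repair your (iii) is correct, though note the paper avoids the auxiliary identity altogether, proving the two inequalities directly from Lemma \ref{useful-identities}.(\ref{(9)}),(\ref{(8)}) and item (ii); your route buys a genuinely useful extra fact about Boolean elements at the cost of one more computation.
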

\begin{proof}
(\ref{(0)}), (\ref{(10)}), (\ref{(7)}) and (\ref{evx}) were proven by ${\rm K\ddot{u}hr}$ in \cite{kuh} for bounded pseudo-BCK-algebras,  hence for noncommutative residuated lattices. For the sake of completeness, we give the proofs in the commutative case.
\be
\item $a=1\rightarrow a=(e\vee \neg\, e)\rightarrow a=(e\rightarrow a)\wedge (\neg\, e\rightarrow a)=\neg\, e\rightarrow a$, since $e\ra a=1$.
\item 
\bua e\rightarrow a&=&1\rightarrow (e\rightarrow a)=(e\vee \neg\, e)\rightarrow (e\rightarrow a)\\
&=&(e\rightarrow (e\rightarrow a))\wedge (\neg\, e\rightarrow (e\rightarrow a))= e\rightarrow (e\rightarrow a),
\eua
since  $\neg\, e=e\rightarrow 0\leq e\rightarrow a$, so $\neg\, e\rightarrow (e\rightarrow a)=1$. 
\item Since $a\leq e\rightarrow a$, we have that $a\rightarrow b\geq (e\rightarrow a)\rightarrow b$, hence
$$e\rightarrow (a\rightarrow b)\geq e\rightarrow ((e\rightarrow a)\rightarrow b)=(e\rightarrow a)\rightarrow (e\rightarrow b).$$
Furthermore, $a\rightarrow b\leq (e\rightarrow a)\rightarrow(e\rightarrow b)$ implies
\bua e\rightarrow (a\rightarrow b)& \leq & e\rightarrow ((e\rightarrow a)\rightarrow(e\rightarrow b))=(e\rightarrow a)\rightarrow (e\rightarrow (e\rightarrow b))\\
&=&(e\rightarrow a)\rightarrow(e\rightarrow b), \text{ by (ii)}.
\eua
\item  Since $a\le \neg\, e\rightarrow a$ and $e=\neg\, \neg\, e=\neg\, e\rightarrow 0\leq \neg\, e\rightarrow a$, it follows that $e\sau a\le  \neg\, e\rightarrow a$.

Let $u\in A$ such that $u\geq e\vee a$. We get that 
\bua(\neg\, e\rightarrow a)\rightarrow u &\stackrel{(i)}{=}& (\neg\, e\rightarrow a)\rightarrow (\neg\, e\rightarrow u)\stackrel{(iii)}{=}\neg e\rightarrow (a\rightarrow u)\\
&=& \neg\, e\rightarrow 1=1.
\eua
\item It is obvious that $a\vee (e\wedge f)\leq (a\vee e)\wedge (a\vee f)$. Now let $x\in A$ such that $x\leq (a\vee e)\wedge (a\vee f)$. Then, by (\ref{evx}), $x\leq a\vee e=\neg \, e\rightarrow a$ and $x\leq a\vee f=\neg \, f\rightarrow a$, so, by the law of residuation, $x\odot \neg \, e\leq a$ and $x\odot \neg \, f\leq a$. It follows that
$$x\odot \neg \, (e\wedge f)=x\odot (\neg \, e\vee \neg \, f)=(x\odot \neg \, e)\vee (x\odot \neg \, f)\leq a,$$
 hence, $x\leq \neg \, (e\wedge f)\rightarrow a=(e\wedge f)\vee a$, by (iv).
\ee
\end{proof}

With the help of the Boolean center we can define a functor $B$ between the category ${\bf RL}$ of residuated lattices and the category {\bf Bool} of Boolean algebras as follows: for any morphism of residuated lattices $f:A_{1}\rightarrow A_{2}$, $B(f):B(A_{1})\to B(A_{2})$ is the restriction of $f$ to $B(A_{1})$.

For each $x\in A$, let us define the operation  
$$\ra_x:A\times A\ra A, \quad a\ra_x b=x\sau(a\ra b).$$

\bprop \label{B(A)-2}
Let  $F$ be a filter of $A$ and $e\in B(A)$. Then
\be
\item ${\bf <e>}=(<e>, \sau, \si, \odot,\ra_e, e, 1)$ is a residuated lattice;
\item $F\,\cap <e>=\{e\sau a\mid a\in F\}$  and $F\,\cap <e>$ is a filter of ${\bf <e>}$;
\item\label{F-Finte} for all $a,b\in A$,
\begin{eqnarray*}
a\equiv b(mod\,\,F)\,\, \mbox{\rm in } A & \mbox{\rm implies } & a\sau e \equiv b\sau e(mod\,\,F\cap<e>)\,\, \mbox{\rm in } {\bf <e>}.
\end{eqnarray*} 
\ee
\eprop
\begin{proof}
Just follow the proof for BL-algebras from \cite[Proposition 1.4.4]{leo}.\end{proof}

\noindent The following results are standard and they can be proved in a similar manner with the corresponding results for MV-algebras (see \cite[Lemmas 6.4.4, 6.4.5]{cdom}).

\bprop\label{finite-direct-product-1}
Let $\{A_i\}_{i\in I}$ be a nonempty family of residuated lattices and let $A\cong \prod_{i\in I}A_i$. Then there exists a family $\{e_i\}_{i\in I}\subseteq B(A)$  satisfying the following conditions:
\be
\item $\displaystyle \si_{i\in I}e_i=0$;
\item $e_i\vee e_j=1$ whenever $i,j\in I, i\ne j$;
\item each $A_i$ is isomorphic to ${\bf <e_i>}$.
\ee
\eprop

\bprop\label{finite-direct-product-2}
Let $A$ be a residuated lattice, $n\ge 2$ and $e_1,\ldots, e_n\in B(A)$ be such that 
\be
\item  $e_1\si\ldots\si e_n=0$;
\item $e_i\sau e_j=1$ whenever $i,j\in \ol{1,n}, i\ne j$.
\ee
Then $A\cong\displaystyle\prod_{i=1}^n{\bf <e_i>}$.  
\eprop

\section{Finite direct products of residuated Lattices}
\label{products}

\noindent In this section, we shall make a study of prime and maximal filters of finite direct products of residuated lattices, similar to the one done for MV-algebras in \cite{bedile92} or BL-algebras in \cite{leo}. 

In the sequel, $I$ will be an index set, $\{A_i\}_{i\in I}$ a family of nontrivial residuated lattices and $A=\displaystyle\prod_{i\in I}A_i$. For each $i\in I$, let $pr_i:A\ra A_i, \quad pr_i\big((a_k)_{k\in I}\big)=a_i$ be the projections. Obviously, $pr_i$ is a surjective morphism of residuated lattices.
For any $i\in I$, let us denote by $\delta_i$ the element of $A$ defined by 
\[pr_i(\delta_i)=0\, \mbox{\rm and } pr_j(\delta_i)=1\,\, \mbox{\rm  for all } j\in I,\,j\ne i. \]

It is clear that $\delta_i\sau\delta_k=1$ for all $i,k\in I, \,i\ne k$.

\bprop\label{produs-1}
Let $P$ be a prime filter of $A$. Then 
\be
\item $pr_i(P)\ne A_i$ for at most one $i\in I$;
\item if $I$ is finite, then $pr_i(P)\ne A_i$ for exactly one $i\in I$.

\ee
\eprop
\begin{proof}$\,$
\be
\item Suppose that there are $i,k\in I, i\ne k$ such that $pr_i(P)\ne A_i$ and $pr_k(P)\ne A_k$. 
Then $\delta_i\sau\delta_k=1\in P$, so, $\delta_i\in P$ or $\delta_k\in P$, since $P$ is a prime filter. It follows that $0=pr_i(\delta_i)\in pr_i(P)$ or $0=pr_k(\delta_k)\in pr_k(P)$. Since, by Proposition \ref{hom}.(\ref{surjective-hF}), $pr_i(P), pr_k(P)$ are filters, we get that $pr_i(P)=A_i$ or $pr_k(P)=A_k$. That is, we have got a contradiction.
\item Let $I=\{1,\ldots,n\}$. By (i), there exists at most one $i\in I$ such that $pr_i(P)\ne A_i$. Suppose that there is no such $i$, that is $pr_i(P)=A_i$ for all $i\in \ol{1,n}$. It follows that for every $i$ there exists $a_i\in P$ such that $pr_i(a_i)=0$. If we let $a:=a_1\odot \ldots\odot a_n$, we get that $a\in P$ and $pr_i(a)=0$ for all $i\in \ol{1,n}$, so $a=0$. Thus, we have got that $0\in P$, a contradiction.
\ee \end{proof}

If $i\in I$ and $Q$ is a prime filter of $A_i$, $pr_i^{-1}(Q)$ is a prime filter of $A$, by Proposition \ref{hom}.(\ref{inverse-image-h-F}). We call it the {\em filter of A over Q} and we denote it by $Ov(Q)$. Let us define
\bua 
Ov(A)&:=&\{P\se A\mid  P=Ov(Q)\text{ for some } Q\in \displaystyle\bigcup_{i\in I}{\rm Spec}(A_i)\}\\
&=& \bigcup \{p_i^{-1}(Q)\mid i\in I, \,Q\in {\rm Spec}(A_i)\}.
\eua
Then $Ov(A)\ne \emptyset$ and $Ov(A)\se {\rm Spec}(A)$.

\bprop\label{main-finite-product}
Suppose that $I$ is finite  and $P$ is a prime filter of $A$. Let $i\in I$  be unique such that $pr_i(P)\ne A_i$. Then
\be 
\item $pr_i(P)$ is a prime filter of $A_i$ and, if $P$ is maximal, then $pr_i(P)$ is also maximal;
\item $P=Ov(pr_i(P))$.
\ee
\eprop
\begin{proof} Let $I=\{1,\ldots, n\}$.
\be
\item By Proposition \ref{hom}.(\ref{surjective-hF}) and the fact that $pr_i(P)\ne A_i$, we get that $pr_i(P)$ is a proper filter of $A$. Let $x,y\in A_i$ be such that $x\sau y\in pr_i(P)$, so $x\sau y=pr_i(c)$ for some $c\in P$. Let us define $c_1,c_2\in A$ by:
$$pr_i(c_1)=x, pr_i(c_2)=y \text{ and } pr_j(c_1)=pr_j(c_2)=pr_j(c)\quad \text{for all }j\ne i.$$
Since $c_1\sau c_2=c\in P$, we must have $c_1\in P$ or $c_2\in P$, hence $x\in pr_i(P)$ or $y\in pr_i(P)$.

If $P$ is maximal, apply Proposition \ref{hom}.(\ref{surjective-hF-maximal}) to get that $pr_i(P)$ is maximal.
\item $Ov(pr_i(P))=pr_i^{-1}(pr_i(P))\supseteq P$. It remains to prove the converse inclusion. Let $a\in Ov(pr_i(P))$. Since $pr_i(a)\in pr_i(P)$, $pr_i(q_i)=pr_i(a)$ for some $q_i\in P$. For $j\in I$, $j\ne i$, $pr_j(P)=A_j$, so there exists $q_j\in P$ such that $pr_j(q_j)=pr_j(a)$. Let 
$$q:=(\delta_1\sau q_1)\odot\ldots\odot (\delta_n\sau q_n).$$
 Then $q\in P$ and $pr_j(q)=pr_j(q_j)=pr_j(a)$  for all $j\in I$, so $q=a$, hence $a\in P$. 
\ee \end{proof}

We are now ready to prove the main result of this section.

\bthm\label{spec+max-finite-product}
Let $n\in {\rm I\! N}^{*}$, $A_1,\ldots,A_n$ be nontrivial residuated lattices and $A=\displaystyle\prod_{i=1}^n A_i$. Then 
\bua
{\rm Spec}(A) & = & \displaystyle\bigcup_{i=1}^n \{A_1\times\ldots\times A_{i-1}\times Q\times A_{i+1}\times \ldots\times A_n\mid Q\in {\rm Spec}(A_i)\},\\
{\rm Max}(A) & = & \displaystyle\bigcup_{i=1}^n \{A_1\times\ldots\times A_{i-1}\times Q\times A_{i+1}\times \ldots\times A_n\mid Q\in {\rm Max}(A_i)\},
\eua
hence
\bua 
\left|{\rm Spec}(A)\right|  = \displaystyle\sum_{i=1}^n\left|{\rm Spec}(A_i)\right| \quad \text{and}\quad \left|{\rm Max}(A)\right| = \displaystyle\sum_{i=1}^n\left|{\rm Max}(A_i)\right|.
\eua
\ethm
\begin{proof} The first equality is an immediate application of Proposition \ref{main-finite-product}; we get that ${\rm Spec}(A)=Ov(A)$, hence the first equality. 

If $M\in {\rm Max}(A)$ and $i\in \ol{1,n}$ is unique such that $pr_i(M)\ne A_i$, then $pr_i(M)\in {\rm Max}(A_i)$ by Proposition \ref{main-finite-product}, and $M=Ov(pr_i(M))=A_1\times\ldots\times A_{i-1}\times pr_i(M)\times A_{i+1}\times \ldots\times A_n$. Conversely, if  $Q\in {\rm Max}(A_i)$ for some $i\in \ol{1,n}$, then $A_1\times\ldots\times A_{i-1}\times Q\times A_{i+1}\times \ldots\times A_n=pr_i^{-1}(Q)\in {\rm Max}(A)$ by Proposition \ref{hom}.(\ref{inverse-image-h-F}).\end{proof}

\section{Dense elements and lifting Boolean center}\label{dense-lifting}

\noindent Let $A$ be a residuated lattice. An element $a$ of $A$ is said to be {\em dense} iff $\neg \,a=0$. Following \cite{fre}, we denote by $Ds(A)$ the set of the dense elements of $A$. It is easy to see that $Ds(A)$ is a filter of $A$ satisfying $Ds(A)\se {\rm Rad}(A)$ \cite{fre}.


\blem\label{F-se-Ds-prop}
Let $A$ be a residuated lattice and $F\se Ds(A)$ be a filter of $A$. Then
\be 
\item\label{Ds-aF-0F} for all $a,b\in A$, $a/F=0/F$ iff $a=0$ and $a/F\leq \neg \, b/F$ iff $a\leq \neg \, b$;
\item\label{ord-a-aF} for all $a\in A$, $ord(a)=ord(a/F)$;
\item\label{ord-a-aF-infinite}  $p_F\big(\{a\in A \mid ord(a)=\infty \}\big)=\{a/F\mid ord(a/F)=\infty \}$;
\item\label{ord-a-aF-finite} for all $a\in A$, $a$ is finite in $A$ if and only if $a/F$ is finite in $A/F$. Hence,  $p_F(\{a\in A \mid a \text{ is finite}\})=\{a/F\mid a/F \text{ is finite}\}$.
\ee
\elem
\begin{proof}$\,$
\be
\item  By Lemma \ref{prop-cong}.(\ref{aF=0Fsau1F}), $a/F=0/F$ iff $\neg a\in F\se Ds(A)$. Thus, $a/F=0/F$ implies $\neg a\in Ds(A)$, so $\neg \, \neg \, a=0$, that is equivalent to $a=0$, since $a\leq \neg\neg a$. The converse implication is obvious.

$a/F\leq \neg \, b/F$ iff $a\leq \neg \, b$ follows using the above and the fact that in any residuated lattice $x\le\neg \, y$ iff $x\odot y=0$. 
\item By (\ref{Ds-aF-0F}), for all $a\in A$ and all $n\in\N$,  $a^{n}=0$ iff $a^{n}/F=0/F$ iff $(a/F)^{n}=0/F$, hence $ord(A)=ord(a/F)$.
\item $p_F\big(\{a\in A \mid ord(a)\!=\!\infty \}\big)\!\!=\!\!\{a/F \mid ord(a)\!=\!\infty \}\!\stackrel{(\ref{ord-a-aF})}{=}\!\{a/F\mid ord(a/F)\!=\!\infty \}$.
\item follows easily from (\ref{ord-a-aF}).
\ee \end{proof}

\bprop\label{Ds-prop}
Let $A$ be a residuated lattice. Then

\be 
\item\label{Ds-A-F} $Ds(A/F)=Ds(A)/F$ for any filter $F$ of $A$ contained in $Ds(A)$;
\item\label{Ds-subalg} $Ds(B)=B\cap Ds(A)$ for any subalgebra $B$ of $A$;
\item\label{Ds-product} $\displaystyle Ds\left(\prod _{i\in I}A_i\right)=\prod _{i\in I}Ds(A_{i})$ for any family $\{A_{i}\mid i\in I\}$ of residuated lattices.
\ee
\eprop
\begin{proof} (\ref{Ds-A-F}) is an immediate consequence of Lemma \ref{F-se-Ds-prop}.(\ref{Ds-aF-0F}). (\ref{Ds-subalg}), (\ref{Ds-product}) are obvious.\end{proof}

\begin{proposition}\label{Ds-local-perfect}
A residuated lattice $A$ is local (perfect) if and only if $A/Ds(A)$ is local (perfect).
\label{lrlocala}
\end{proposition}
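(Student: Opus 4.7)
The statement naturally splits into two parts: the equivalence of locality of $A$ and $A/Ds(A)$, and the equivalence of perfectness.

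For the local case, the plan is to invoke results already available. Since $Ds(A)\se {\rm Rad}(A)$ (noted at the beginning of Section \ref{dense-lifting}), Proposition \ref{Max-A-AF-RadA}.(ii) applies directly with $F:=Ds(A)$: if $A$ is nontrivial, then $|{\rm Max}(A/Ds(A))|=|{\rm Max}(A)|$, so $A$ has exactly one maximal filter iff $A/Ds(A)$ does. (The trivial case $A=\{0\}$ needs no argument since then $Ds(A)=A$ and $A/Ds(A)$ is trivial as well; by our convention, neither is local.)

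For the perfect case, the plan is to reduce to the local case plus order-preservation. Recall $A$ is perfect iff $A$ is local \emph{and} for every $a\in A$, $ord(a)<\infty$ iff $ord(\neg\, a)<\infty$. By the previous paragraph, the locality hypothesis transfers between $A$ and $A/Ds(A)$, so it remains to show the order-condition transfers too. The key tool is Lemma \ref{F-se-Ds-prop}.(\ref{ord-a-aF}) applied with $F:=Ds(A)$, which gives $ord(a)=ord(a/Ds(A))$ for every $a\in A$. Since the quotient map $p_{Ds(A)}$ is a morphism of residuated lattices, $\neg\,(a/Ds(A))=(\neg\, a)/Ds(A)$, so also $ord(\neg\, a)=ord((\neg\, a)/Ds(A))=ord(\neg\,(a/Ds(A)))$. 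Thus the biconditional ``$ord(a)<\infty$ iff $ord(\neg\, a)<\infty$'' for all $a\in A$ is equivalent to the same biconditional for all classes in $A/Ds(A)$, using surjectivity of $p_{Ds(A)}$ to range over $A/Ds(A)$.

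There is essentially no obstacle here: the lemma and proposition invoked do all the work, and the only thing to keep in mind is that $\neg$ commutes with the quotient map, which is immediate from $p_{Ds(A)}$ being a residuated lattice morphism (so that $\neg\, a/Ds(A)=(a\ra 0)/Ds(A)=(a/Ds(A))\ra (0/Ds(A))=\neg\,(a/Ds(A))$). Putting the two halves together yields the claimed equivalences for both ``local'' and ``perfect''.
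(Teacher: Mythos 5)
Your proof is correct and follows essentially the same route as the paper's: the paper likewise applies Proposition \ref{Max-A-AF-RadA} (via $Ds(A)\subseteq{\rm Rad}(A)$) for the local case and Lemma \ref{F-se-Ds-prop}.(\ref{ord-a-aF}) for the perfect case. Your write-up merely makes explicit the small step that $\neg$ commutes with the quotient map, which the paper leaves implicit.
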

\begin{proof}
Since  $Ds(A)\se {\rm Rad}(A)$, we can apply Proposition \ref{Max-A-AF-RadA} to get that $A$ is local if and only if $A/Ds(A)$ is local. Use Proposition \ref{F-se-Ds-prop}.(\ref{ord-a-aF}) to obtain the other equivalence.\end{proof}

Let $f:A\rightarrow B$ be a morphism of residuated lattices and define
\bua
\overline{f}:A/Ds(A)\rightarrow B/Ds(B), \quad\overline{f}(a/Ds(A))=f(a)/Ds(B).
\eua 
It is easy to see that $\overline{f}$ is a well-defined morphism of residuated lattices and that the diagram below is commutative ($p_{A}$ and $p_{B}$ are the canonical surjections).

\begin{center}
\begin{picture}(170,70)(0,0)

\put(17,53){$A$}
\put(20,50){\vector(0,-1){20}}
\put(4,38){$p_{A}$}
\put(125,38){$p_{B}$}

\put(20,18){$A/Ds(A)$}
\put(26,58){\vector(1,0){90}}
\put(68,61){$f$}

\put(119,53){$B$}
\put(119,18){$B/Ds(B)$}
\put(62,22){\vector(1,0){54}}

\put(122,50){\vector(0,-1){20}}
\put(87,9){$\overline{f}$}
\end{picture}
\end{center}

As a consequence, we can define a (covariant) functor ${\bf T}:{\bf RL}\rightarrow {\bf RL}$  by setting ${\bf  T}(A)=A/Ds(A)$ and ${\bf T}(f)=\ol{f}$.

\begin{proposition}
{\bf T} preserves surjective morphisms, injective morphisms and direct products.
\label{T-functor}
\end{proposition}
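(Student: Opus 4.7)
The plan is to verify the three preservation properties separately; each reduces to a short argument using results already established in the excerpt. Surjectivity is immediate: if $f:A\to B$ is surjective, then for any $b\in B$ pick $a\in A$ with $f(a)=b$ and observe that $\overline{f}(a/Ds(A))=f(a)/Ds(B)=b/Ds(B)$, so $\overline{f}$ hits every coset of $B/Ds(B)$.

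For injectivity I would invoke the characterization stated just before Proposition \ref{hom}, namely that a morphism $h$ is injective iff $Ker(h)=\{1\}$. It thus suffices to show $Ker(\overline{f})=\{1/Ds(A)\}$ whenever $f$ is injective. Take $a/Ds(A)\in Ker(\overline{f})$; then $f(a)/Ds(B)=1/Ds(B)$, i.e.\ $f(a)\in Ds(B)$, which means $\neg f(a)=0$. Since $\neg$ is defined by $x\mapsto x\to 0$ and $f$ is a morphism of residuated lattices, $f(\neg a)=\neg f(a)=0=f(0)$; injectivity of $f$ then forces $\neg a=0$, so $a\in Ds(A)$ and $a/Ds(A)=1/Ds(A)$.

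For preservation of direct products the computation reduces to two identities already in the paper: $Ds\bigl(\prod_{i\in I}A_i\bigr)=\prod_{i\in I}Ds(A_i)$ by Proposition \ref{Ds-prop}.(\ref{Ds-product}), and $\bigl(\prod A_i\bigr)/\prod F_i=\prod(A_i/F_i)$ by Proposition \ref{product-AF}. Combining them yields ${\bf T}\bigl(\prod A_i\bigr)=\prod{\bf T}(A_i)$ as residuated lattices, the canonical isomorphism being $(a_i)_i/Ds(\prod A_i)\mapsto(a_i/Ds(A_i))_i$. For a genuinely functorial preservation of products one additionally checks compatibility with projections: if $\pi_i:\prod A_j\to A_i$ is the $i$-th projection, then under the above isomorphism ${\bf T}(\pi_i)=\overline{\pi_i}$ coincides with the $i$-th projection of $\prod{\bf T}(A_j)$ onto ${\bf T}(A_i)$; this is immediate from the ``Moreover'' clause of Proposition \ref{product-AF} ($p_F=\prod p_i$) together with the definition of $\overline{\pi_i}$ via the commutative diagram preceding the statement.

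I do not foresee a real obstacle here. The only delicate moment is the injectivity step, where one must notice the translation $f(a)\in Ds(B)\Leftrightarrow f(\neg a)=0$; once that equivalence is made, all three claims become one- or two-line consequences of the propositions already at hand.
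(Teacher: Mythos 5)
Your proposal is correct and follows essentially the same route as the paper: surjectivity by a direct element chase (the paper phrases it as surjectivity of the composite $p_B\circ f=\overline{f}\circ p_A$), injectivity via the translation $x\in Ds(\cdot)\Leftrightarrow\neg\,x=0$ combined with $f(\neg\,a)=\neg\,f(a)$ (the paper runs the same computation as a chain of equivalences on $a_1\leftrightarrow a_2$ rather than through the kernel criterion), and products from Proposition \ref{Ds-prop}.(\ref{Ds-product}) together with Proposition \ref{product-AF}, exactly as in the paper.
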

\begin{proof}
Let $f:A\rightarrow B$ be a morphism of residuated lattices. If $f$ is surjective, then  $p_{B}\circ f=\overline{f}\circ p_{A}$ is also surjective, hence  $\overline{f}$ is surjective.

Assume now that $f$ is injective and let $a_{1},a_{2}\in A$. Then we have the fo\-l\-lo\-wing sequence of equivalences:  $\overline{f}(a_{1}/Ds(A))=\overline{f}(a_{2}/Ds(A))$ iff $f(a_{1})/Ds(B)=f(a_{2})/Ds(B)$ iff $f(a_{1})\leftrightarrow f(a_{2})\in Ds(B)$ iff $f(a_{1}\leftrightarrow a_{2})\in Ds(B)$ iff $\neg \,f(a_{1}\leftrightarrow a_{2})=0$ iff $f(\neg \,(a_{1}\leftrightarrow a_{2}))=0$ iff (since $f$ is injective) $\neg \,(a_{1}\leftrightarrow a_{2})=0$ iff $a_{1}\leftrightarrow a_{2}\in Ds(A)$ iff $a_{1}/Ds(A)=a_{2}/Ds(A)$. Hence, $\overline{f}$ is injective.

Let $\{A_{i}\mid i\in I\}$ be a family of residuated lattices and $ A=\prod _{i\in I}A_{i}$. By Proposition \ref{Ds-prop}.(\ref{Ds-product}), $ Ds(A)=\prod _{i\in I}Ds(A_{i})$. Apply now Proposition \ref{product-AF}.\end{proof}

A residuated lattice $A$ is said to be {\em radical-dense} iff ${\rm Rad}(A)=Ds(A)$. Let us denote with  rd-${\cal RL}$ the class of radical-dense residuated lattice. This terminology is inspired by \cite{fre}, where a variety ${\cal A}$ is called {\em radical-dense} provided that $A$ is a subvariety of $\cal RL$ and ${\rm Rad}(A)=Ds(A)$.

\begin{proposition}
rd-${\cal RL}$ is  closed with respect to subalgebras and direct products.
\end{proposition}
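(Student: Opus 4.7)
The plan is to verify the two closure properties separately, and in each case to rely on the fact that Radical and Dense filter each commute with the relevant construction, as already established earlier in the paper. Recall that the inclusion $Ds(A)\subseteq {\rm Rad}(A)$ is known in general, so for any candidate residuated lattice we only need to check the reverse inclusion.

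For closure under subalgebras, suppose $A$ is radical-dense and $B$ is a subalgebra of $A$. I would apply Proposition \ref{Rad-prop}.(\ref{Rad-subalg}) to get ${\rm Rad}(B)=B\cap {\rm Rad}(A)$, and Proposition \ref{Ds-prop}.(\ref{Ds-subalg}) to get $Ds(B)=B\cap Ds(A)$. Since ${\rm Rad}(A)=Ds(A)$ by assumption, these two equalities together yield ${\rm Rad}(B)=Ds(B)$, so $B$ is radical-dense.

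For closure under direct products, let $\{A_i\}_{i\in I}$ be a family of radical-dense residuated lattices and let $A=\prod_{i\in I}A_i$. I would invoke Proposition \ref{Rad-prop}.(\ref{Rad-product}) to write ${\rm Rad}(A)=\prod_{i\in I}{\rm Rad}(A_i)$, and Proposition \ref{Ds-prop}.(\ref{Ds-product}) to write $Ds(A)=\prod_{i\in I}Ds(A_i)$. Since ${\rm Rad}(A_i)=Ds(A_i)$ for each $i$ by hypothesis, the two products coincide, giving ${\rm Rad}(A)=Ds(A)$.

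There is no real obstacle here: the proposition is essentially a bookkeeping corollary of the commutation properties of $Ds$ and ${\rm Rad}$ with subalgebras and products. The only thing to be careful about is invoking the right parts of Propositions \ref{Rad-prop} and \ref{Ds-prop}, and noting that both inclusions (not just $Ds\subseteq {\rm Rad}$) come for free from the equalities above.
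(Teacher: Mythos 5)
Your argument is correct and is exactly the paper's proof, just spelled out in more detail: the paper likewise cites Proposition \ref{Rad-prop}.(\ref{Rad-subalg}) together with Proposition \ref{Ds-prop}.(\ref{Ds-subalg}) for subalgebras, and Proposition \ref{Rad-prop}.(\ref{Rad-product}) together with Proposition \ref{Ds-prop}.(\ref{Ds-product}) for direct products. Nothing further is needed.
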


\begin{proof}
Apply Propositions \ref{Ds-prop}.(\ref{Ds-subalg}) and \ref{Rad-prop}.(\ref{Rad-subalg}) to get closure under subalgebras. For obtaining closure with respect to direct products, use Propositions \ref{Ds-prop}.(\ref{Ds-product}) and \ref{Rad-prop}.(\ref{Rad-product}).
\end{proof}

In the following, let $A$ be a residuated lattice. Since $Ds(A)\se {\rm Rad}(A)$, we can apply Proposition \ref{prop-p_F}.(\ref{phi-F-G}) to get a surjective morphism of residuated lattices $\phi _{A}:A/Ds(A)\to A/{\rm Rad}(A), \,\phi _{A}(a/Ds(A))=a/{\rm Rad}(A)$ that makes the following diagram commutative. 
\begin{center}
\begin{picture}(70,70)(0,0)
\put(2,43){$A$}
\put(12,48){\vector(1,0){38}}
\put(27,51){$p_{A}$}
\put(53,43){$A/Ds(A)$}
\put(56,40){\vector(0,-1){26}}
\put(59,25){$\phi _{A}$}
\put(10,40){\vector(4,-3){40}}
\put(14,20){$r_{A}$}
\put(53,2){$A/{\rm Rad}(A)$}
\end{picture}
\end{center}
In the diagram above, $p_A$ and $r_A$ are the quotient maps. Moreover, $\phi _{A}$ is an isomorphism if and only if $A$ is radical-dense, that is $Ds(A)={\rm Rad}(A)$. 

If $B:{\bf RL}\to {\bf Bool}$ is the functor defined in Section \ref{prelim1},  the diagram above induces the following commutative diagram in the category of Boolean algebras.
\begin{center}
\begin{picture}(70,70)(0,0)
\put(2,43){$B(A)$}
\put(32,48){\vector(1,0){38}}
\put(34,51){$B(p_{A})$}
\put(73,43){$B(A/Ds(A))$}
\put(76,40){\vector(0,-1){26}}
\put(79,25){$B(\phi _{A})$}
\put(30,40){\vector(4,-3){40}}
\put(14,20){$B(r_{A})$}
\put(73,2){$B(A/{\rm Rad}(A))$}
\end{picture}
\end{center}

\begin{lemma}
$B(p_{A})$ and $B(r_{A})$ are injective.
\end{lemma}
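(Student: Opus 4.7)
The plan is to reduce injectivity of both maps to the single fact $B(A)\cap {\rm Rad}(A)=\{1\}$ from Proposition \ref{BA-prop}.(\ref{BA-RadA=1}), using that $Ds(A)\se {\rm Rad}(A)$.

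First I would unwind what injectivity means for $B(p_A)$. Take $e,f\in B(A)$ with $B(p_A)(e)=B(p_A)(f)$, i.e.\ $e/Ds(A)=f/Ds(A)$. By the definition of the congruence associated to $Ds(A)$, this is equivalent to $e\leftrightarrow f\in Ds(A)$. Since Proposition \ref{BA-prop}.(\ref{BA-e-f}) ensures $e\leftrightarrow f\in B(A)$, we obtain $e\leftrightarrow f\in B(A)\cap Ds(A)$.

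Next I would use $Ds(A)\se {\rm Rad}(A)$, which was noted in the paragraph introducing $Ds(A)$ in Section \ref{dense-lifting}. This gives $e\leftrightarrow f\in B(A)\cap {\rm Rad}(A)$, so by Proposition \ref{BA-prop}.(\ref{BA-RadA=1}) we conclude $e\leftrightarrow f=1$. By Lemma \ref{useful-identities}.(\ref{(3)}), $e\leftrightarrow f=1$ forces $e\to f=1$ and $f\to e=1$, hence $e\le f$ and $f\le e$, i.e.\ $e=f$. So $B(p_A)$ is injective.

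For $B(r_A)$ the argument is identical, only shorter: if $e,f\in B(A)$ and $e/{\rm Rad}(A)=f/{\rm Rad}(A)$, then $e\leftrightarrow f\in B(A)\cap {\rm Rad}(A)=\{1\}$, and we conclude $e=f$ as before. No real obstacle is expected here; the whole content of the lemma is that the Boolean center meets the radical trivially, which has already been established.
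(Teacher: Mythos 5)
Your proof is correct and rests on the same key fact as the paper's, namely $B(A)\cap{\rm Rad}(A)=\{1\}$ from Proposition \ref{BA-prop}.(\ref{BA-RadA=1}). The only (immaterial) difference is that you prove injectivity of $B(p_A)$ directly via $Ds(A)\se{\rm Rad}(A)$, whereas the paper proves it for $B(r_A)$ first and then deduces it for $B(p_A)$ from the commutativity of the diagram $B(r_A)=B(\phi_A)\circ B(p_A)$.
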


\begin{proof}
For all $e,f\in B(A)$,  we have that $B(r_{A})(e)=B(r_{A})(f)$ iff $e/{\rm Rad}(A)=f/{\rm Rad}(A)$ iff $e\leftrightarrow f\in B(A)\cap {\rm Rad}(A)$ iff $e\leftrightarrow f=1$ \big(by Proposition \ref{BA-prop}.(\ref{BA-e-f}), (\ref{BA-RadA=1})\big) iff $e=f$. Hence, $B(r_{A})$ is injective and the fact that $B(p_{A})$ is injective follows from the commutativity of the diagram.\end{proof}

We say that $A$ has {\em lifting Boolean center} iff $B(r_{A})$ is surjective (and hence a Boolean isomorphism). 

\begin{remark}
It is easy to see that the definition given above coincides with the one from the introduction: $A$ has lifting Boolean center if and only if for every $e\in B(A/{\rm Rad}(A))$ there exists an $f\in B(A)$ such that $e=f/{\rm Rad}(A)$.
\end{remark}

The fact that MV-algebras have lifting Boolean center was already proved in \cite[Proposition 5]{figele}. Moreover, using \cite[Lemma 2.7.6]{leo}, we can conclude that $BL$-algebras have lifting Boolean center too.

\bprop\label{rd-li}
Any radical-dense residuated lattice has lifting Boolean center. 
\eprop
\begin{proof}
If $A$ is radical-dense, then $\phi_A$ is an isomorphism, by Proposition \ref{prop-p_F}.(\ref{phi-F-G}), hence $B(\phi_A)$ is an isomorphism of Boolean algebras. This implies obviously the surjectivity of $B(r_A)$.\end{proof}

\bprop\label{LP-prop}
\be
\item If $A$ has lifting Boolean center then $B(\phi _{A})$ is surjective.
\item $A/Ds(A)$ has lifting Boolean center if and only if $B(\phi _A)$ is a Boolean isomorphism.
\item Let $\{A_{i}\mid i\in I\}$ be a family of residuated lattices.  Then $\prod _{i\in I}A_{i}$ has lifting Boolean center if and only if $A_i$ has lifting Boolean center for every $i\in I$.
\ee
\eprop
\begin{proof}
\be
\item follows from the commutative diagram above.
\item For simplicity, we denote $A^\star\!:=A/Ds(A)$ and $H\!:={\rm Rad}(A)/Ds(A)$.  Since $Ds(A)\se {\rm Rad}(A)$, ${\rm Rad}(A^\star)=H$ by Proposition \ref{Max-A-AF-RadA}. Moreover, by applying Proposition \ref{prop-p_F}.(\ref{second-iso}) we obtain an isomorphism 
\[\psi: A^\star/H\to A/{\rm Rad}(A), \quad \psi\left((a/Ds(A))/H\right)=a/{\rm Rad}(A).\]

If $r_{A^\star}:A^\star\to A^\star/{\rm Rad}(A^\star)$ is the quotient map, then $\psi\circ r_{A^\star}=\phi_A$. By applying the functor $B$, we get that $B(\psi)$ is an isomorphism of Boolean algebras such that $B(\psi)\circ B\left(r_{A^\star}\right)=B(\phi_A)$.

It follows that $A^\star$ has lifting Boolean center if and only if $B\left(r_{A^\star}\right)$ is an isomorphism if and only if $B(\phi_A)$ is an isomorphism.
\item Let $A:=\prod _{i\in I}A_{i}$, $r_i:A_i\to A_i/{\rm Rad}(A_i), r_A:A\to A/{\rm Rad}(A)$ be the quotient maps. Then $r_A=\prod _{i\in I}r_i$ by Propositions \ref{product-AF} and \ref{Rad-prop}.(\ref{Rad-product}). Moreover, since $B(A)=\prod _{i\in I}B(A_{i})$, it follows that $B(r_A)=\prod _{i\in I}B(r_i)$. 
\ee \end{proof}

We finish this section with an example of a residuated lattice without lifting Boolean center. 

\begin{example}\cite{GalKowJipOno}
Let $A=\{0,a,b,c,d,1\}$ be the following residuated lattice.

\begin{center}
\begin{picture}(60,100)(0,0)
\put(37,11){\circle*{3}}
\put(35,0){$0$}
\put(37,11){\line(3,4){12}}
\put(49,27){\circle*{3}}
\put(53,24){$d$}
\put(49,27){\line(0,1){20}}
\put(49,47){\circle*{3}}
\put(53,44){$c$}
\put(49,47){\line(-3,4){12}}
\put(37,63){\circle*{3}}
\put(41,63){$a$}
\put(37,11){\line(-1,1){26}}
\put(11,37){\circle*{3}}
\put(3,34){$b$}
\put(11,37){\line(1,1){26}}
\put(37,63){\line(0,1){20}}
\put(37,83){\circle*{3}}
\put(35,85){$1$}
\end{picture}
\end{center}

\begin{center}
\begin{tabular}{cc}
\begin{tabular}{c|cccccc}
$\rightarrow $ & $0$ & $a$ & $b$ & $c$ & $d$ & $1$ \\ \hline
$0$ & $1$ & $1$ & $1$ & $1$ & $1$ & $1$ \\
$a$ & $0$ & $1$ & $b$ & $c$ & $c$ & $1$ \\
$b$ & $c$ & $1$ & $1$ & $c$ & $c$ & $1$ \\
$c$ & $b$ & $1$ & $b$ & $1$ & $a$ & $1$ \\
$d$ & $b$ & $1$ & $b$ & $1$ & $1$ & $1$ \\
$1$ & $0$ & $a$ & $b$ & $c$ & $d$ & $1$
\end{tabular}
& \hspace*{11pt}
\begin{tabular}{c|cccccc}
$\odot $ & $0$ & $a$ & $b$ & $c$ & $d$ & $1$ \\ \hline
$0$ & $0$ & $0$ & $0$ & $0$ & $0$ & $0$ \\
$a$ & $0$ & $a$ & $b$ & $d$ & $d$ & $a$ \\
$b$ & $0$ & $b$ & $b$ & $0$ & $0$ & $b$ \\
$c$ & $0$ & $d$ & $0$ & $d$ & $d$ & $c$ \\
$d$ & $0$ & $d$ & $0$ & $d$ & $d$ & $d$ \\
$1$ & $0$ & $a$ & $b$ & $c$ & $d$ & $1$
\end{tabular}
\end{tabular}\end{center}

The maximal filters of $A$ are $\{a,b,1\}$ and $\{a,c,d,1\}$, so ${\rm Rad}(A)=\{a,1\}$.  It is easy to verify that 
$B(A)=\{0,1\}$ and
\[B(A/{\rm Rad}(A))=A/{\rm Rad}(A)=\{0/{\rm Rad}(A),b/{\rm Rad}(A),c/{\rm Rad}(A), 1/{\rm Rad}(A)\}.\]
Since $|B(A/{\rm Rad}(A))|=4>2=|B(A)|$, $B(r_A)$ can not be surjective.
\end{example}

\section{Semilocal residuated lattices}\label{semiloc}

\noindent A residuated lattice is said to be {\em semilocal} iff it has only a finite number of maximal filters.

The trivial residuated lattice has no maximal filters, hence it is obviously semilocal. We shall consider only nontrivial semilocal residuated lattices. The class of semilocal residuated lattices includes finite residuated lattices as well as the local ones. It is easy to construct examples of semilocal residuated lattices that are not local: any finite direct product of $n\ge 2$ local residuated lattices has exactly $n$ maximal filters, by Theorem \ref{spec+max-finite-product}.

\begin{remark}
The class of semilocal residuated lattices is  a pseudo-variety, i.e., it is closed under finite direct products, homomorphic images and subalgebras. 
\end{remark}
\begin{proof}
Apply Theorem \ref{spec+max-finite-product}, Proposition \ref{hom}.(\ref{hom-cardinal-image}) and Proposition \ref{subalgebra-cardinal-filters}.\end{proof}

\bprop\label{prop-semilocal}
Let $A$ be a nontrivial residuated lattice and $F$ be a proper filter such that $F\se {\rm Rad}(A)$. Then
\be
\item $A$ is semilocal if and only if $A/F$ is semilocal;
\item\label{} if $A$ is a semilocal and ${\rm Max}(A)\!=\!\{M_1,\ldots, M_n\}$, then 
\[A/{\rm Rad}(A)\cong \prod_{i=1}^n A/M_i.\]
\ee
\eprop

\begin{proof}
\be
\item By Proposition \ref{Max-A-AF-RadA}.
\item Let us consider the canonical projection
\[\varphi: A/{\rm Rad}(A)\ra \prod_{i=1}^n A/M_i, \quad \varphi(a/{\rm Rad}(A))=(a/M_1,\ldots ,a/M_n).\]
For every $a,b\in A$, we get that 
$\varphi(a/{\rm Rad}(A))=\varphi(b/{\rm Rad}(A))$ iff $a/M_i=b/M_i$ for all $i\in \ol{1,n}$ iff $a\leftrightarrow b\in M_i$ for all $i\in \ol{1,n}$ iff $a\leftrightarrow b\in {\rm Rad}(A)$ iff $a/{\rm Rad}(A)=b/{\rm Rad}(A)$. Hence, $\varphi$ is well defined and injective. Since $M_1,\ldots, M_n$ are distinct maximal filters, it follows that $M_i\sau M_j=A$ for all $i\ne j$, so we can apply Proposition \ref{chinese} to get for every $a_1,\ldots, a_n\in A$ an $a\in A$ such that $a/M_i=a_i/M_i$ for all $i\in \ol{1,n}$, so $\varphi (a/{\rm Rad}(A))=(a_1/M_1,\ldots, a_n/M_n)$. Thus, we have proved that $\varphi$ is surjective too. It is easy to see that $\varphi$ is a morphism of residuated lattices, hence it is an isomorphism of residuated lattices.
\ee \end{proof}

\section{Maximal residuated lattices}\label{maximal}

\noindent Let $A$ be a residuated lattice, $I$ an index set, $\{a_i\}_{i\in I}\se A$ and $\{F_i\}_{i\in I}$ be a family of filters of $A$. We say that the family $\{(a_i,F_i)\}_{i\in I}$ has {\em finite intersection property} (abbreviated {\em f.i.p.}) iff the family of sets $\left\{a_i/F_i\right\}_{i\in I}$ has finite intersection property, i.e. the intersection of every finite subfamily is nonempty. Formally,  $\{(a_i,F_i)\}_{i\in I}$ has f.i.p. iff 
\bce
for any finite $J\se I$ there exists $x_J\in A$ with $x_J\equiv a_i(mod\, F_i)$ for all $i\in J$.
\ece
$A$ is said to be {\em maximal} iff whenever $\{(a_i,F_i)\}_{i\in I}$ is a family with f.i.p., there exists $x\in A$ such that $x\equiv a_i(mod\,\,F_i)$ for all $i\in I$.

In this section we prove the main result of the paper, the structure theorem for maximal residuated lattices with lifting Boolean center. Before this, we give some useful properties of maximal residuated lattices.

By the Chinese Remainder Theorem, the following lemma is immediate.

\blem\label{lem-maximal-1}
Let $A$ be a maximal residuated lattice. Then for any family $\{a_M\}_{M\in {\rm Max}(A)}$ of elements of $A$ there exists $x\in A$ such that $x\equiv a_M(mod\, M)$ for all $M\in {\rm Max}(A)$.
\elem

\bprop\label{maximal=>semilocal}
Any maximal residuated lattice is semilocal. 
\eprop

\begin{proof}
Let $A$ be a maximal residuated lattice, $F_M$ be the filter of $A$ defined in Lemma \ref{filtre-max-1} and define ${\cal F}:=\{(1,F_M)\}\cup \{(0,M)\mid M\in {\rm Max}(A)\}$. 

In order to prove that the family ${\cal F}$ has f.i.p., let us consider a finite subfamily $\{(1,F_M),(0,M_1),\ldots, (0,M_n)\}$ and apply Lemma \ref{lem-maximal-1} to get the existence of an $x\in A$ satisfying $x\equiv 0(mod\, M_i)$ for all $i\in \ol{1,n}$ and $x\equiv 1(mod\, M)$ for all $M\in {\rm Max}(A)-\{M_1,\ldots, M_n\}$. By Lemma \ref{prop-cong}.(\ref{aF=0Fsau1F}),(\ref{prop-cong-Fproper}) and Lemma \ref{filtre-max-1}, we get that $x\in \bigcap \{M\mid M\in {\rm Max}(A)-\{M_1,\ldots, M_n\} \}\se F_M$, so $x\equiv 1(mod\, F_M)$. Since $A$ is maximal and ${\cal F}$ has f.i.p., there exists $y\in A$ such that $y\equiv 1(mod\, F_M)$ and $y\equiv 0(mod\,M)$ for all $M\in {\rm Max}(A)$. Thus, $y\in F_M$ and $y\nin M$ for any maximal filter $M$ of $A$. It follows that ${\rm Max}(A)=\{M\in {\rm Max}(A)\mid y\nin M\}$, which is finite due to the fact that $y\in F_M$.\end{proof}

The converse of the above proposition does not hold: an example of a semilocal MV-algebra that is not maximal can be found in \cite[Proposition 8]{figele}.

\bprop\label{<e>-maximal}
Let $A$ be a maximal residuated lattice and $e\in B(A)$. Then ${\bf <e>}$ is also a maximal residuated lattice.
\eprop

\begin{proof}
Let ${\cal F}=\{(a_i, F_i)\}_{i\in I}$ be a family that has f.i.p. in ${\bf <e>}$. Since $a_i\in A$ and every filter $F_i$ of ${\bf <e>}$ is also a filter of $A$, it follows that ${\cal F}$ has f.i.p. in $A$ too. Apply now the fact that $A$ is maximal to get an $x\in A$ such that $x\equiv a_i(mod\,\,F_i)$ in $A$ for all $i\in I$. By Proposition \ref{B(A)-2}.(\ref{F-Finte}), it follows that $x\sau e\equiv a_i\sau e(mod\, F_i\,\cap <e>)$ in ${\bf <e>}$ for all $i\in I$. Since $F_i\cap <e>=F_i$ as $F_i\se <e>$ and $a_i\sau e=a_i$ as $a_i\in <e>$, we get that   $x\sau e\in<e>$ is such that $x\sau e\equiv  a_i(mod\,F_i)$ for all $i\in I$. Thus, ${\bf <e>}$ is maximal.\end{proof}

\bprop\label{finite-product-maximal}
The class of maximal residuated lattices is closed under finite direct products.
\eprop

\begin{proof}
Assume that $A=\displaystyle\prod_{i=1}^n A_i,$ where $n\ge 1$ (the case $n=0$ is trivial), where $A_1,\ldots, A_n$ are maximal residuated lattices. By Proposition \ref{finite-direct-product-1}, there exist $e_1,\ldots, e_n\in B(A)$ satisfying $e_1\si \ldots \si e_n=0$, $e_i\sau e_j=1$ for $i\ne j$ such that $A_i\cong <\bf {e_i}>$ for all $i\in \ol{1,n}$. 
Let $\{(a_k, F_k)\}_{k\in K}$ have f.i.p. in $A$ and apply Proposition \ref{B(A)-2}(\ref{F-Finte}) to get that  the family $\{(a_k\sau e_i, F_k\cap <e_i>)\}_{k\in K}$ has f.i.p. in ${\bf <e_i>}$ for any $i\in \ol{1,n}$. Since ${\bf <e_i>}$ is maximal, there exists $x_i\in <e_i>$ such that $x_i\equiv a_k\sau e_i(mod\,F_k\cap <e_i>)$ for all $k\in K$: in particular, $x_i\equiv a_k\sau e_i(mod\,F_k)$ in $A$ for all $k\in K$. Let $x=x_1\si\ldots \si x_n$. Then $x\in A$ is such that $x\equiv (a_k\sau e_1)\si \ldots \si (a_k\sau e_n)(mod\,F_k)\equiv a_k(mod\,F_k)$ for all $k\in K$, since, by Lemma \ref{bcomut}.(\ref{comutarea}), $(a_k\sau e_1)\si\ldots \si(a_k\sau e_n)=a_k\sau(e_1\si\ldots \si e_n)=a_k\sau 0=a_k$. Thus, $A$ is maximal.\end{proof}

\bthm\label{char-maximal}
Let $A$ be a nontrivial residuated lattice with lifting Boolean center. Then the following are equivalent:
\be
\item $A$ is maximal;
\item there are $n\in \N^{\star}$ and $e_1,\ldots, e_n\in B(A)$ such that $\ds A\cong \prod_{i=1}^n<{\bf e_i}>$ and $<{\bf e_i}>$ is a nontrivial maximal residuated lattice for  all $i=\ol{1,n}$.
\ee
\ethm
\begin{proof}
$(ii)\Ra (i)$ Apply Proposition \ref{finite-product-maximal}.\\
$(i)\Ra (ii)$ By Proposition \ref{maximal=>semilocal}, $A$ is semilocal, hence ${\rm Max}(A)=\{M_1,\ldots, M_n\}$ for some $n\in{\rm I\! N}^{*}$ and, moreover, $\ds A/{\rm Rad}(A)\cong\displaystyle\prod_{i=1}^n A/M_i$ (see Proposition \ref{prop-semilocal}). Apply Proposition \ref{finite-direct-product-1} to get $f_1,\ldots, f_n\in B(A/{\rm Rad}(A))$ such that $f_1\si \ldots \si f_n=0/{\rm Rad}(A)$, $f_i\sau f_j=1/{\rm Rad}(A)$ for $i\ne j$ and $\ds A/M_i\cong {\bf <f_i>}$ for all $i= \ol{1,n}$.  Since $A$ has lifting Boolean center, there exist $e_1,\ldots,e_n\in B(A)$ such that $f_i=e_i/{\rm Rad}(A)$ for all $i\in \ol{1,n}$. It follows that $(e_1\si\ldots \si e_n)/{\rm Rad}(A)=f_1\si\ldots \si f_n=0/{\rm Rad}(A)$, so $\neg \, (e_1\si\ldots \si e_n)\in B(A)\cap {\rm Rad}(A)=\{1\}$, by Lemma \ref
{prop-cong}.(\ref{aF=0Fsau1F}) and Proposition \ref{BA-prop}.(\ref{BA-RadA=1}). Thus, $\neg \, (e_1\si\ldots \si e_n)=1$, so $e_1\si\ldots \si e_n=\neg \, \neg \, (e_1\si\ldots \si e_n)=\neg \, 1=0$, by Proposition  \ref{BA-prop}.(\ref{BA-neg-e}). We get similarly that, for $i\ne j$, $(e_i\sau e_j)/{\rm Rad}(A)=f_i\sau f_j=1/{\rm Rad}(A)$, so $e_i\sau e_j=1$. Applying now Proposition \ref{finite-direct-product-2}, it follows that $A\cong \displaystyle\prod_{i=1}^n{\bf <e_i>}$. Moreover,  ${\bf <e_i>}$ is maximal for all $i\in \ol{1,n}$, by Proposition \ref{<e>-maximal}. Finally, ${\bf <e_i>}$ is nontrivial, since $A/M_{i}$ is nontrivial.\end{proof}

\bthm\label{main-theorem}
Let $A$ be a nontrivial residuated lattice with lifting Boolean center. If $A$ is maximal, then $A$ is isomorphic to a finite direct product of local residuated lattices, each of which is clearly a maximal residuated lattice.
\ethm
\begin{proof}
By Theorem \ref{char-maximal}, $A\cong\displaystyle\prod_{i=1}^n{\bf <e_i>}$, where $n=|{\rm Max}(A)\mid \ge 1$, $e_1,\ldots, e_n\in B(A)$ and ${\bf <e_i>}$ is nontrivial and maximal for all $i\in \ol{1,n}$. It remains to prove that ${\bf <e_i>}$ is local. Since ${\bf <e_i>}$ is nontrivial, $|{\rm Max}({\bf <e_i>})|\ge 1$ for any $i\in \ol{1,n}$. On the other hand, applying Theorem \ref{spec+max-finite-product}, we get that $n=|{\rm Max}(A)|=\displaystyle\sum_{i=1}^n|{\rm Max}({\bf <e_i>})|\ge n$. Thus, we must have $|{\rm Max}({\bf <e_i>})|=1$ for all $i\in \ol{1,n}$, that is ${\bf <e_i>}$ is local for all $i\in \ol{1,n}$.\end{proof}

\end{document}